\newcommand{\E}{\mathbb{E}}
\newcommand{\Prob}{\mathbb{P}}
\newcommand{\C}{\mathbb{C}}
\newcommand{\var}{\mathrm{Var}}
\newcommand{\tr}{\mathrm{Tr}}
\renewcommand\Im{\operatorname{Im}}
\theoremstyle{plain}
  \newtheorem{theorem}{Theorem}
  \newtheorem{lemma}[theorem]{Lemma}
\theoremstyle{definition}
  \newtheorem{definition}[theorem]{Definition}
  \newtheorem{example}[theorem]{Example}
  \newtheorem{remark}[theorem]{Remark}
\begin{document}
\title[A Note on the Marchenko-Pastur law]{A Note on the Marchenko-Pastur Law for a class of random matrices with dependent entries}
\author[S. O'Rourke]{Sean O'Rourke} 
\address{Department of Mathematics, Rutgers, Piscataway, NJ 08854  }
\email{sdo21@math.rutgers.edu}

\begin{abstract}
We consider a class of real random matrices with dependent entries and show that the limiting empirical spectral distribution is given by the Marchenko-Pastur law.  Additionally, we establish a rate of convergence of the expected empirical spectral distribution.  
\end{abstract}

\maketitle

\section{Introduction and Main Results}

Suppose $M_n$ is a $n \times n$ matrix with real eigenvalues $\lambda_1, \lambda_2, \ldots, \lambda_n$.  Then the empirical spectral distribution (ESD) of the matrix $M_n$ is defined by 
$$ F^{M_n}(x) := \frac{ \# \left\{ 1 \leq i \leq n : \lambda_i \leq x \right\} }{n} .$$
We will be interested in the case when $M_n := \frac{1}{n} A_n A_n^\mathrm{T}$ and $A_n$ is an $n \times N$ real random matrix.  

If the entries of $A_n$ are i.i.d. random variables with zero mean and variance one, we call $M_n$ a sample covariance matrix.  There are many results concerning the limiting behavior of the spectral distribution of sample covariance matrices.  For example, Marchenko and Pastur (\cite{mp}) and Wachter (\cite{wa}) prove that the ESD $F^{\frac{1}{n} A_n A_n^\mathrm{T}}(x)$ converges to $F_c(x)$ provided that $N/n \rightarrow c \in (0, \infty)$, where $F_c$ is the distribution function for the Marchenko-Pastur law with parameter $c>0$.  That is, $F_c$ has density
$$ p_c(x) = \left\{
     \begin{array}{lr}
       \frac{ \sqrt{(x-a)(b-x)}}{2 \pi x} & : a \leq x \leq b, \\
       0 & :  \text{otherwise},
     \end{array}
   \right.
$$
and a point mass $1-c$ at the origin if $c < 1$, where $a = (1 -\sqrt{c})^2$ and $b = (1 + \sqrt{c})^2$.  The above convergence holds with probability $1$ (see for example \cite{B} and \cite[Chapter 3]{bai-book}).  

There are a number of results in which the independence assumption (on the entries of $A_n$) is weakened.  In the seminal paper by Marchenko and Pastur \cite{mp}, one considers independent rows rather than independent entries.  In \cite{yin}, Yin and Krishnaiah consider the case where the independent rows have a spherically symmetric distribution.  

More recently in 2006, Aubrun obtained the Marchenko-Pastur law for matrices with independent rows distributed uniformly on the $l_p^n$ balls, \cite{aubrun}.  This was generalized by Pajor and Pastur in \cite{PP} to matrices with independent rows distributed according to an arbitrary isotropic log-concave measure.  

In \cite{GT} and \cite{GT3}, G\"{o}tze and Tikhomirov study two classes of random matrices which generalize Wigner random matrices and sample covariance random matrices.   In particular, these matrices satisfy certain martingale-type conditions without any assumption on the independence of the entries.  In a similar setting, Adamczak studied a class of random matrices with uncorrelated entries in which each normalized row and normalized column converges to one in probability, \cite{A}.  

Other random matrix ensembles with dependent entries that have been studied include random Markov matrices with independent rows and doubly stochastic random matrices (see \cite{Chafai, bcc, cds} and references contained therein).  

In this note, we study a class of random matrices with dependent entries and show that the limiting empirical distribution of the eigenvalues is given by the Marchenko-Pastur law.  In particular, we consider a sequence of $n \times N$ random matrices $A_n$ with the following properties.

\begin{definition}[Condition {\bf C0}] \label{def:C0}
Let $\{A_n\}_{n \geq 1}$ be a sequence of $n \times N$ real random matrices where $N = N(n)$ and $c_n := N/n$.  We let $r_1^{(n)}, \ldots, r_n^{(n)}$ denote the rows of $A_n = (\zeta_{ij}^{(n)})_{1 \leq i \leq n, 1 \leq j \leq N}$ and define the $\sigma$-algebra associated to row $k$ as
$$ \mathcal{F}_k^{(n)} := \sigma( r_1^{(n)}, \ldots, r_{k-1}^{(n)}, r_{k+1}^{(n)}, \ldots, r_n^{(n)} ) $$
for all $k = 1, \ldots, n$.  Let $\E_k[ \cdot ]$ denote the conditional expectation with respect to the $\sigma$-algebra associated to row $k$.  We then say that the sequence $\{A_n\}_{n \geq 1}$ obeys condition {\bf C0} if the following hold:
\begin{enumerate}[(i)]
\item $\E_k[ \zeta_{ki}^{(n)} ] = 0$ for all $i,k,n$ \label{C0:uncor}
\item One has
$$ q_n := \sup_{k} \frac{1}{n} \sum_{i=1}^N \E|\E_k[ (\zeta_{ki}^{(n)})^2 ] - 1| = o(1) $$
 \label{C0:var}
\item One has 
$$ \sup_{k, i \neq j} |\E_k[ \zeta_{ki}^{(n)} \zeta_{kj}^{(n)}]| + \sup_{k,i,j \neq l} |\E_k[ (\zeta_{ki}^{(n)})^2 \zeta_{kj}^{(n)} \zeta_{kl}^{(n)}]| = O(n^{-1/2} \gamma_n) $$
a.s., where $\gamma_n \rightarrow 0$ as $n \rightarrow \infty$. \label{C0:2cor}
\item $\sup | \E_k[ \zeta_{ki}^{(n)} \zeta_{kj}^{(n)} \zeta_{kl}^{(n)} \zeta_{km}^{(n)}]| = O(n^{-1} \gamma_n)$ a.s where the supremum is over all $k$ and all $i,j,l,m$ distinct. \label{C0:4cor}
\item $\sup_{n,i,j} \E |\zeta_{ij}^{(n)}|^4 \leq M < \infty$ \label{C0:moment}
\item One has
$$ \rho_n := \sup_{k} \frac{1}{n^2} \sum_{1 \leq i,j \leq N} \E|\E_k[ (\zeta_{ki}^{(n)})^2 (\zeta_{kj}^{(n)})^2] - 1| = o(1). $$ \label{C0:row_variance}
\item There exists a non-negative integer sequence $\beta_n = o(\sqrt{n})$ such that $\sigma(r_{i_1}^{(n)}, \ldots, r_{i_k}^{(n)})$ and $\sigma(r_{j_1}^{(n)}, \ldots, r_{j_m}^{(n)})$ are independent $\sigma$-algebras whenever
$$ \min_{1 \leq l \leq k, 1 \leq p \leq m} |i_l - j_p| > \beta_n. $$ \label{C0:indep}
\end{enumerate}
\end{definition}

\begin{remark}
Condition \eqref{C0:uncor} implies that entries from different rows are uncorrelated while \eqref{C0:2cor} and \eqref{C0:4cor} allow for a weak correlation amongst entries in the same row.  Condition \eqref{C0:var} is a requirement on the variance of the entries and \eqref{C0:moment} is a moment assumption on the entries.  Condition \eqref{C0:row_variance} is of a technical nature.  In particular, \eqref{C0:row_variance} (along with \eqref{C0:var}) allows one to control terms of the form
$$ \sup_{k} \var \left( \frac{1}{n} |r_k^{(n)}|^2 \right) $$
where $|r_k^{(n)}|$ is the Euclidian norm of the vector $r_k^{(n)}$.  In words, condition \eqref{C0:indep} implies that rows, which are ``far enough apart,'' are independent.  
\end{remark}

\begin{example}
Let $\xi$ be a real random variable with mean zero, variance one, and $\E|\xi^4| < \infty$.  Let $A_n$ be an $n \times N$ matrix where each entry is an i.i.d. copy of $\xi$.  If $N/n \rightarrow c \in (0, \infty)$, then $A_n$ satisfies Definition \ref{def:C0}.  All the results in this paper are already known for such matrices with i.i.d. entries.  See for example \cite{mp}, \cite[Chapter 3]{bai-book}, \cite{B}, and references contained therein.  
\end{example}

\begin{example} \label{example:bern}
Let $A_n$ be a $n \times (2n)$ matrix where the rows are i.i.d. random vectors such that the entries of $r_k^{(n)}$ are $\pm 1$ symmetric Bernoulli random variables chosen uniformly such that the sum of the entries of each row is zero.  Then the sequence $\{A_n\}_{n \geq 1}$ obeys condition {\bf C0}.  Indeed, one can compute 
\begin{align*}
	\E[ \zeta_{ij}^{(n)}] &= 0,\\ \var [\zeta_{ki}^{(n)}] &= 1, \\
	\E[ \zeta_{ki}^{(n)} \zeta_{kj}^{(n)} ] &= - \frac{1}{2N-1} \text{ for } i \neq j,
	\intertext{and}
	\E[ \zeta_{ki}^{(n)} \zeta_{kj}^{(n)} \zeta_{kl}^{(n)} \zeta_{km}^{(n)}] &= \frac{12N^2 - 12N}{2N(2N-1)(2N-2)(2N-3)} = O\left( \frac{1}{N^2} \right)
\end{align*}
for $i,j,l,m$ distinct, where $N = 2n$.  In particular, one finds that $\gamma_n = n^{-1/2}$ and $q_n, \rho_n, \beta_n = 0$.  
\end{example}

Let us mention that the conditions in Definition \ref{def:C0} are similar to the assumptions of Theorem 1 in \cite{mp}.  However, in \cite{mp}, the authors require the rows of $A_n$ to be independent.   

Also, the sequence of random matrices defined in Example \ref{example:bern} satisfies condition {\bf C0}, but does not satisfy the assumptions of the theorems provided in \cite{aubrun}, \cite{PP}, \cite{GT}, or \cite{A}.  

Let $\|M\|$ denote the spectral norm of the matrix $M$.  In this paper, we shall prove the following theorems.  

\begin{theorem} \label{thm:main}
Let $\{A_n\}_{n \geq 1}$ be a sequence of real random matrices that obey condition {\bf C0} and assume $c_n := N/n \rightarrow c \in (0, \infty)$.  Then 
$$ \|\E F^{\frac{1}{n}A_n A_n^\mathrm{T}} - F_{c} \| : = \sup_{x} |\E F^{\frac{1}{n}A_n A_n^\mathrm{T}} - F_{c}| \longrightarrow 0 $$
as $n \rightarrow \infty$.  Moreover, if there exists $p>1$ such that
\begin{equation} \label{eq:betasum}
	\sum_{n=1}^\infty \frac{ (\beta_n+1)^p }{n^{p/2}} < \infty 
\end{equation}
then
$$ \|F^{\frac{1}{n}A_n A_n^\mathrm{T}} - F_{c} \| \longrightarrow 0 $$
almost surely as $n \rightarrow \infty$.  
\end{theorem}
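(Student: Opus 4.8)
The plan is to follow the classical moment/Stieltjes-transform strategy for the Marchenko–Pastur law, but carried out with conditional expectations in place of genuine independence. Write $M_n = \frac1n A_n A_n^\mathrm{T}$ and let $s_n(z) = \frac1n \tr (M_n - z)^{-1}$ be the Stieltjes transform of $F^{M_n}$. The first step is to show that $\E s_n(z) \to s_c(z)$ for each fixed $z \in \C \setminus \R$, where $s_c$ is the Stieltjes transform of $F_c$, which satisfies the quadratic (self-consistent) equation
\begin{equation*}
  c z\, s_c(z)^2 - (1 - c - z)\, s_c(z) + 1 = 0 .
\end{equation*}
To derive this, I would expand $s_n(z)$ using the Schur complement / rank-one perturbation formula: writing $r_k$ for the $k$-th row of $A_n$, the $(k,k)$ entry of $(M_n - z)^{-1}$ is
\begin{equation*}
  \bigl[(M_n - z)^{-1}\bigr]_{kk} = \frac{1}{\tfrac1n |r_k|^2 - z - \tfrac{1}{n^2} r_k^\mathrm{T} A_n^{(k)}\bigl(\tfrac1n A_n^{(k)} (A_n^{(k)})^\mathrm{T} - z\bigr)^{-1} (A_n^{(k)})^\mathrm{T} r_k } ,
\end{equation*}
where $A_n^{(k)}$ is $A_n$ with row $k$ deleted. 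Here condition {\bf C0} enters decisively: part \eqref{C0:uncor} makes $r_k$ "mean-zero" relative to $\mathcal{F}_k^{(n)}$, parts \eqref{C0:var} and \eqref{C0:2cor}--\eqref{C0:4cor} give $\E_k$-concentration of the quadratic form $\tfrac1n r_k^\mathrm{T} B r_k$ around $\tfrac1n \tr B \cdot (1 + o(1))$ for $\mathcal{F}_k^{(n)}$-measurable $B$ (via the variance bound together with the correlation decay), part \eqref{C0:row_variance} controls $\var(\tfrac1n|r_k|^2)$, and part \eqref{C0:moment} supplies the fourth-moment bound needed for all these $L^2$ estimates. Averaging over $k$ and using that $\tfrac1n \tr (A_n^{(k)}(A_n^{(k)})^\mathrm{T}/n - z)^{-1}$ differs from $s_n(z)$ by $O(1/n)$, one obtains $\E s_n(z) = s_c(z) + o(1)$, and then the usual argument (e.g. via the moment problem or directly) upgrades pointwise Stieltjes convergence to $\|\E F^{M_n} - F_c\| \to 0$, the Kolmogorov-distance bound, using that $F_c$ has a bounded density away from $0$ plus the explicit point mass.

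For the almost-sure statement the plan is to prove concentration of $s_n(z)$ around its mean, and here the mixing condition \eqref{C0:indep} replaces the martingale-difference argument that would be available under row-independence. The standard approach writes $s_n(z) - \E s_n(z)$ as a telescoping sum $\sum_{k=1}^n (\E^{(k)} - \E^{(k-1)}) s_n(z)$ with respect to the filtration generated by the rows; each summand is bounded by $O(1/n)$ via the rank-one resolvent bound, so under genuine independence Azuma/Burkholder gives $\var(s_n(z)) = O(1/n)$ and Borel–Cantelli finishes. Here the rows are not independent, but \eqref{C0:indep} says rows more than $\beta_n$ apart are independent; so I would instead partition the index set $\{1,\dots,n\}$ into $O(\beta_n+1)$ classes of residues mod $(\beta_n+1)$ (or chunk the telescoping sum into blocks), reducing to $O(\beta_n+1)$ genuinely independent pieces. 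This inflates the variance bound to $\var(s_n(z)) = O\bigl((\beta_n+1)/n\bigr)$, or more precisely gives an $L^p$ bound $\E|s_n(z) - \E s_n(z)|^p = O\bigl(((\beta_n+1)/n)^{p/2}\bigr)$ via the block version of Burkholder's inequality; hypothesis \eqref{eq:betasum} is exactly what makes $\sum_n \E|s_n(z)-\E s_n(z)|^p < \infty$, so Borel–Cantelli yields $s_n(z) \to s_c(z)$ a.s. for fixed $z$, and then a countable-dense-set argument plus the uniform Lipschitz control of Stieltjes transforms on compact subsets of the upper half-plane promotes this to a.s. convergence of $s_n(z) \to s_c(z)$ for all such $z$, hence $\|F^{M_n} - F_c\| \to 0$ a.s.

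I expect the main obstacle to be the conditional-concentration step: establishing that $\tfrac1n r_k^\mathrm{T} B_k r_k - \tfrac1n \tr B_k \to 0$ in the appropriate $L^2$ sense, uniformly in $k$, where $B_k$ is $\mathcal{F}_k^{(n)}$-measurable (built from the resolvent of $A_n^{(k)}$) and hence \emph{correlated} with the ambient matrix even though it is independent of row $k$ itself. The variance of this quadratic form expands into a sum over pairs of indices, and the diagonal terms are handled by \eqref{C0:var} and \eqref{C0:row_variance}, the off-diagonal "pair" terms by the first bound in \eqref{C0:2cor}, and the remaining genuinely fourth-order cross terms by \eqref{C0:4cor} and the second bound in \eqref{C0:2cor}; one must check that the factors of $n^{-1/2}\gamma_n$ and $n^{-1}\gamma_n$ there, combined with the operator-norm bound $\|B_k\| \le |\Im z|^{-1}$ and the trace bound $\tfrac1n|\tr B_k| \le |\Im z|^{-1}$, actually conspire to give an $o(1)$ estimate after summing $O(n^2)$ terms. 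A secondary technical point is truncating the entries at level $\varepsilon\sqrt n$ (using \eqref{C0:moment}) so that the quadratic-form estimates have uniformly bounded inputs, and checking that truncation and recentering perturb $s_n(z)$ by $o(1)$ in expectation and negligibly in the a.s. sense; the recentering must be done carefully so as not to destroy the conditional mean-zero property \eqref{C0:uncor}, but since truncation is symmetric in the natural examples this should be routine.
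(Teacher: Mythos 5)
Your proposal follows essentially the same two-stage route as the paper: (1) use the Schur complement to derive an approximate self-consistent equation for $\E s_n(z)$, with conditions \eqref{C0:uncor}--\eqref{C0:row_variance} controlling the quadratic form $\frac1n r_k B_{n,k} r_k^{\mathrm T}$ and condition \eqref{C0:indep} controlling the variance of $\frac1n \tr R_{n,k}$; and (2) concentrate $s_n(z)$ around its mean by a martingale-plus-Burkholder argument adapted to the finite-range dependence, then Borel--Cantelli. The one place where your outline is materially vaguer than the paper is the concentration step, which is precisely where the mixing condition has to be used with care. The paper's Lemma \ref{lemma:variance} keeps the full row filtration $Y_k = \E_{\le k}\frac1n\tr R_n$ and, for each $k$, subtracts $\frac1n\tr R_{n,J(k)}$ where $J(k) = \{k,\dots,k+\beta_n\}$: condition \eqref{C0:indep} gives $\E_{\le k}\tr R_{n,J(k)} = \E_{\le k-1}\tr R_{n,J(k)}$, so each increment is bounded by $O((\beta_n+1)/(n|\Im z|))$ via rank-$(\beta_n+1)$ interlacing, and Burkholder then yields $\E|s_n-\E s_n|^p = O((\beta_n+1)^p/(n^{p/2}|\Im z|^p))$, matching hypothesis \eqref{eq:betasum}. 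Of your two suggestions, the ``residue classes mod $\beta_n+1$'' idea does not directly apply, since $\tr R_n$ is a joint function of all rows and cannot be split into independent residue-class pieces; the ``chunk the telescoping sum into blocks'' idea can be made to work (remove each block together with its right neighbor to restore the martingale property), and in fact yields the slightly sharper bound $O(((\beta_n+1)/n)^{p/2})$, so both versions would close the argument under \eqref{eq:betasum} --- but you should commit to one and write out the block-removal identity, since this is the heart of the a.s. part.

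Two smaller remarks. First, passing from the approximate fixed-point equation to $\E s_n(z)\to m_c(z)$ is not immediate: the paper extracts a convergent subsequence at each fixed $z$, uses $\Im(z\,\E s_n(z))\ge 0$ together with the uniqueness characterization \eqref{eq:mcz} to identify the limit, and then applies Vitali's theorem; your sketch jumps directly to ``$\E s_n(z) = s_c(z)+o(1)$.'' Second, the paper does not truncate at all --- the fourth-moment bound \eqref{C0:moment} together with the $\rho_n$ control \eqref{C0:row_variance} already make the $L^2$ estimates go through, and avoiding truncation sidesteps exactly the recentering issue you flag (preserving \eqref{C0:uncor}). Also, your displayed quadratic for $s_c$ has a spurious factor of $c$ in the leading coefficient; the equation following from \eqref{eq:mcz} is $z m_c^2 + (z+1-c)m_c + 1 = 0$, though this is harmless since you would derive whatever equation the Schur complement produces.
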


\begin{theorem} \label{thm:rate}
Let $\{A_n\}_{n \geq 1}$ be a sequence of real random matrices that obey condition {\bf C0} and assume $c_n := N/n  \geq 1$ such that $c_n \rightarrow c \in [1, \infty)$.  Additionally assume that 
\begin{equation} \label{eq:norm_bound}
	\limsup_{n \rightarrow \infty} \frac{1}{n} \E \| A_n A_n^\mathrm{T} \| < \infty. 
\end{equation}
Then we obtain that
$$ \|\E F^{\frac{1}{n} A_n A_n^\mathrm{T}} - F_{c_n} \| = O \left( \max\left( q_n^{1/22}, \gamma_n^{1/22}, \rho_n^{1/22}, \left( \frac{(\beta_n+1)^2}{n} \right)^{1/22} \right) \right). $$
\end{theorem}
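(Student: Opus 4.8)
The plan is to prove Theorem \ref{thm:rate} via the Stieltjes transform method, establishing a self-consistent equation for the expected Stieltjes transform of $\frac{1}{n} A_n A_n^\mathrm{T}$ up to an explicit error, and then converting the Stieltjes transform bound into a Kolmogorov-distance bound using a standard smoothing inequality (e.g.\ the Bai-type inequality bounding $\|F - G\|$ by an integral of the difference of Stieltjes transforms along a line $\Im z = v$ plus boundary terms). Write $s_n(z) := \frac{1}{n} \E \tr (\frac{1}{n} A_n A_n^\mathrm{T} - z)^{-1}$ for $z = u + iv$ with $v > 0$. The Marchenko-Pastur Stieltjes transform $s_c(z)$ satisfies the quadratic $c z s^2 - (1 - c - z) s + 1 = 0$ (for the $\frac1n AA^{\mathrm T}$ normalization with aspect ratio $c_n$), so the goal is to show $s_n$ satisfies the same equation with $c$ replaced by $c_n$ up to an error term $\delta_n(z)$ controlled by $\max(q_n, \gamma_n, \rho_n, (\beta_n+1)^2/n)$ times a power of $1/v$.

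The derivation of the self-consistent equation is the technical core. First I would use the Schur complement / resolvent identity: denoting by $a_k^{\mathrm T}$ the $k$-th row of $A_n$ and by $B_k$ the matrix $\frac1n A_n A_n^{\mathrm T}$ with the $k$-th row and column deleted, one has
$$ \left( \tfrac1n A_n A_n^{\mathrm T} - z \right)^{-1}_{kk} = \frac{1}{\tfrac1n |a_k|^2 - z - \tfrac{1}{n^2} a_k^{\mathrm T} A_{n}^{(k)\mathrm T} (B_k - z)^{-1} A_n^{(k)} a_k}, $$
where $A_n^{(k)}$ is $A_n$ with row $k$ removed. Here is where condition {\bf C0} enters: conditioning on $\mathcal{F}_k^{(n)}$, part (i) kills cross terms, part (ii) (via $q_n$) and part (vi) (via $\rho_n$) control $\frac1n|a_k|^2 \to 1$ and its variance, parts (iii)--(iv) (via $\gamma_n$) bound the off-diagonal contributions to the quadratic form $\frac{1}{n^2} a_k^{\mathrm T} A_n^{(k)\mathrm T}(B_k - z)^{-1} A_n^{(k)} a_k$, and the moment bound (v) gives the required integrability. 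Crucially, unlike the i.i.d.\ case, $a_k$ is \emph{not} independent of $A_n^{(k)}$; here condition (vii) is essential — I would replace $A_n^{(k)}$ by the submatrix $A_n^{(k,\beta_n)}$ obtained by also deleting all rows within distance $\beta_n$ of row $k$, which \emph{is} independent of $a_k$, at the cost of a rank-$O(\beta_n)$ perturbation. Rank perturbation changes the resolvent trace by $O(\beta_n/(nv))$ and changes the quadratic form similarly, contributing the $(\beta_n+1)^2/n$ term (the square coming from a second application when also comparing $B_k$ to its $\beta_n$-reduced version, or from a $v^{-2}$ factor). After these reductions, standard concentration (e.g.\ a conditional variance estimate for quadratic forms in weakly dependent variables, using (iii),(iv),(vi)) shows $\frac{1}{n^2} a_k^{\mathrm T} A_n^{(k,\beta_n)\mathrm T}(B_k^{(\beta_n)} - z)^{-1} A_n^{(k,\beta_n)} a_k \approx \frac1n \tr (B_k - z)^{-1} \approx s_n(z)$, yielding the approximate fixed-point equation.

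From the perturbed quadratic equation $c_n z\, s_n^2 - (1 - c_n - z) s_n + 1 = \delta_n(z)$ with $|\delta_n(z)| \le C(v) \,\varepsilon_n$ where $\varepsilon_n := \max(q_n,\gamma_n,\rho_n,(\beta_n+1)^2/n)$ and $C(v)$ a negative power of $v$, I would invoke the stability of the Marchenko-Pastur quadratic (its two roots are separated away from the real axis, so $|s_n - s_{c_n}| \lesssim |\delta_n|/|{\rm discriminant}|^{1/2}$, valid when $v$ is not too small relative to $\varepsilon_n$) to get $|s_n(z) - s_{c_n}(z)| \le C'(v)\varepsilon_n$. Feeding this into the smoothing inequality with the line $\Im z = v$ and optimizing $v = v(\varepsilon_n)$ as a small power of $\varepsilon_n$, the several error contributions (the bulk integral $\sim \varepsilon_n v^{-A}$, the truncation tails, and the $O(v)$ boundary term from the smoothing inequality, plus a term controlling the integral near the spectral edges which is where the hypothesis \eqref{eq:norm_bound} is used to ensure no mass escapes and the edge behavior is benign) balance to give $O(\varepsilon_n^{1/22})$; the exponent $1/22$ is simply what falls out of this optimization given the crude powers of $v^{-1}$ accumulated. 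The main obstacle, and the place requiring genuine care rather than bookkeeping, is the dependence structure: handling the non-independence of $a_k$ and $A_n^{(k)}$ through the $\beta_n$-truncation while simultaneously tracking how the rank-$O(\beta_n)$ perturbations propagate through both the diagonal resolvent entries and the quadratic forms without the error blowing up faster than $(\beta_n+1)^2/n$ — in particular controlling the conditional variance of the quadratic form in the weakly-correlated (not independent) entries of $a_k$ using only the moment bounds (iii),(iv),(vi), which is more delicate than the classical Hanson-Wright-type argument available in the i.i.d.\ setting.
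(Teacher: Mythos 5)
Your high-level strategy is exactly the paper's: Schur complement to get a diagonal resolvent identity, derive an approximate self-consistent equation of Marchenko--Pastur type, bound $|\E s_n - m_{c_n}|$ by a power of $v^{-1}$ times an error $\varepsilon_n$ using the stability of the quadratic, control the tail and edge terms in Bai's smoothing inequality (\cite[Theorem~B.14]{bai-book}) via the norm hypothesis \eqref{eq:norm_bound} and \cite[Lemma~8.15]{bai-book}, then optimize $v = v(\varepsilon_n)$. The exponent $1/22$ is indeed just bookkeeping (here $v^{-5}\varepsilon_n^{1/2} \sim v^{1/2}$ gives $v_n = \varepsilon_n^{1/11}$).

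The genuine divergence is in your treatment of the dependence of the $k$-th row on the remaining rows. You propose to restore true independence by deleting all rows within distance $\beta_n$ of row $k$, replacing $A_n^{(k)}$ by $A_n^{(k,\beta_n)}$, and then controlling the resulting rank-$O(\beta_n)$ perturbations in both the resolvent trace and the quadratic form. The paper does no such surgery. It works directly with $B_{n,k} := A_{n,k}^{\mathrm T} R_{n,k}(z) A_{n,k}$ (only row $k$ deleted), which is $\mathcal F_k^{(n)}$-measurable, so the conditional expectation $\E_k$ pulls $B_{n,k}$ out and all moment estimates for the quadratic form $r_k B_{n,k} r_k^{\mathrm T}$ reduce to the almost-sure conditional moment bounds in conditions \eqref{C0:uncor}--\eqref{C0:row_variance} of Definition~\ref{def:C0}; no independence between $r_k$ and $B_{n,k}$ is needed or used at this stage. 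Condition \eqref{C0:indep} enters only once, and in a different place: to bound $\var\bigl(\tfrac1n\tr B_{n,k}\bigr)$ (equivalently $\var(\tfrac1n\tr R_n)$), via a martingale-difference decomposition with $Y_k = \E_{\leq k}\tfrac1n\tr R_n$ and Burkholder's inequality. There the block $J(k)=\{k,\dots,k+\beta_n\}$ of rows is removed at once so that the corresponding resolvent is $\E_{\leq k}$/$\E_{\leq k-1}$-invariant, each martingale difference is $O((\beta_n+1)/(n v))$ by interlacing, and Burkholder gives $O((\beta_n+1)^2/(n v^2))$. That is the source of the $(\beta_n+1)^2/n$ term --- not a rank perturbation of the Schur-complement quadratic form.

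Your route is not manifestly wrong, but it is harder than you suggest and I would want to see the details before accepting it: conditions \eqref{C0:uncor}--\eqref{C0:row_variance} give moment control \emph{conditionally on all other rows}, not conditionally on only the $\beta_n$-distant ones, so if you insist on a matrix $A_n^{(k,\beta_n)}$ that is unconditionally independent of $r_k$ you must re-express and re-justify the moment estimates in that framework. Moreover, while a rank-$O(\beta_n)$ perturbation of the resolvent changes the normalized trace by $O(\beta_n/(n v))$, the corresponding change in the quadratic form $\tfrac1n r_k B r_k^{\mathrm T}$ is not controlled by rank alone --- it depends on the overlap of $r_k$ with the perturbation, and bounding its expectation requires an additional argument using the fourth-moment hypothesis. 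The paper sidesteps both of these issues entirely by keeping the Schur-complement computation conditional and isolating the independence hypothesis into the single variance lemma (Lemma~\ref{lemma:variance}). I recommend you recast your argument in that form.
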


\begin{remark} \label{rem:assumption}
We stated Theorem \ref{thm:main} for a sequence of random matrices that obey condition {\bf C0}.  However, it is actually possible to prove the convergence of the expected ESD without condition \eqref{C0:indep} from Definition \ref{def:C0}.  That is, if the sequence $\{A_n\}_{n \geq 1}$ satisfies conditions \eqref{C0:uncor} - \eqref{C0:row_variance} from Definition \ref{def:C0} with $c_n \rightarrow c \in (0, \infty)$, then 
$$ \|\E F^{\frac{1}{n}A_n A_n^\mathrm{T}} - F_{c} \| \longrightarrow 0 $$
as $n \rightarrow \infty$.  The proof of this statement repeats the proof of Theorem \ref{thm:main} almost exactly.  We detail the necessary changes in Remark \ref{rem:changes}.  It should be noted that the almost sure convergence portion of Theorem \ref{thm:main} still requires condition \eqref{C0:indep} from Definition \ref{def:C0} and \eqref{eq:betasum}.  
\end{remark}

\begin{remark}
Without any additional information on the convergence rate of $c_n$ to $c$, we cannot obtain a rate of convergence of $\| \E F^{A_n A_n^\mathrm{T}} - F_{c} \|$.  This is why $F_{c_n}$ appears in Theorem \ref{thm:rate}.
\end{remark}

\begin{remark}
The rates obtained in Theorem \ref{thm:rate} are not optimal and are obtained as a simple corollary to Lemma \ref{lemma:equation} below.  
\end{remark}

\begin{example}
Let $\{A_n\}_{n \geq 1}$ be the sequence of random matrices defined in Example \ref{example:bern}.  Theorem \ref{thm:main} implies that 
$$ \|F^{\frac{1}{n}A_n A_n^\mathrm{T}} - F_{2} \| \longrightarrow 0 $$
almost surely as $n \rightarrow \infty$.  We will now use Theorem \ref{thm:rate} to obtain a rate of convergence for $\E F^{\frac{1}{n} A_n A_n^\mathrm{T}}$.  We must verify that \eqref{eq:norm_bound} holds.  By \cite[Theorem 3.13]{ALPT} 
\footnote{One technical assumption required by Theorem 3.13 is control over the $\psi_1$-norm ($\| \cdot \|_{\psi_1}$) of the term $|\langle \xi,y \rangle|$ where $\xi$ is a row of the matrix $A_n$ and $y$ is an arbitrary unit vector.  In particular, one can show that $$ \| \langle \xi,y \rangle \|_{\psi_1} \leq \frac{\sqrt{n}}{\log n^{1+\epsilon}}. $$  The bound follows by applying Markov's inequality, which yields $$ \Prob( |\langle \xi,y \rangle| > t) = O(t^{-4}), $$ and then taking $t = n^{1/3}$.  },
 there exists $C,C'>0$ such that for any $0 < \epsilon < 1/3$,
$$ \Prob \left( \|A_n A_n^\mathrm{T} \| \geq C n \right) \leq C'\frac{\log n}{n^{1+\epsilon}}. $$ 
Since we always have the bound
$$ \| A_n A_n ^\mathrm{T} \| \leq \tr (A_n A_n^\mathrm{T}) = n^2, $$
it follows that
$$ \E \| A_n A_n ^\mathrm{T} \| = O(n). $$
Therefore, Theorem \ref{thm:rate} gives the rate of convergence
$$ \|\E F^{\frac{1}{n} A_n A_n^\mathrm{T}} - F_{2} \| = O(n^{-1/44}). $$
\end{example}

\section{Stieltjes Transform}

If $G(x)$ is a function of bounded variation on the real line, then its Stieltjes transform is defined by
$$ S_G (z) = \int \frac{1}{x - z} dG(x) $$
for $z \in D := \left\{ z \in \C : \Im(z) > 0 \right \}$.  

Let $m_c(z)$ be the Stieltjes transform of $F_c$, the distribution function of the Marchenko-Pastur law with parameter $c$.  One can then check (see for example \cite{bai-book}), that
$$ m_c(z)  = \frac{ -(z+1-c) + \sqrt{(z+1-c)^2 - 4z} }{2z}. $$
Furthermore, $m_c(z)$ can be characterized uniquely as the solution to 
\begin{equation} \label{eq:mcz}
	m_c(z) = \frac{1}{c - 1 - z -z m_c(z)} 
\end{equation}
that satisfies $\Im(z m_c(z)) \geq 0$ for all $z$ with $\Im z > 0$.  

We will study the Stieltjes transform of the ESD of the random matrix $\frac{1}{n}A_n A_n^\mathrm{T}$ in order to prove Theorems \ref{thm:main} and \ref{thm:rate}.  In particular, the following lemma states that it suffices to show the convergence of the Stieltjes transform of the ESD to the Stieltjes transform of $F_c$.

\begin{lemma}[{\cite[Theorem B.9]{bai-book}}]
Assume that $\{G_n\}$ is a sequence of functions of bounded variation and $G_n(-\infty)=0$ for all $n$.  Then
$$ \lim_{n \rightarrow \infty} s_{G_n}(z) = s(z) \quad \forall z \in D $$
if and only if there is a function of bounded variation $G$ with $G(-\infty) = 0$ and Stieltjes transform $s(z)$ and such that $G_n \rightarrow G$ vaguely.  
\end{lemma}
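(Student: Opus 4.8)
\section*{Proof proposal for the lemma}

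The plan is to establish the two implications separately, using Helly's selection theorem together with the Stieltjes inversion formula as the main tools. Throughout I will use that the $G_n$ have uniformly bounded total variation; this is automatic in the only situation needed here (where each $G_n$ is a cumulative distribution function, hence of total variation one), and it is precisely what makes Helly's theorem applicable and what lets one replace a compactly supported test function by one that merely vanishes at infinity. I will also use the characterization of vague convergence $G_n \to G$ as $\int f\,\mathrm{d}G_n \to \int f\,\mathrm{d}G$ for every continuous compactly supported $f$.

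For the implication ``vague convergence $\Rightarrow$ convergence of Stieltjes transforms'', suppose $G_n \to G$ vaguely, where $G$ has bounded variation and $G(-\infty)=0$, and let $s$ be its Stieltjes transform. Fix $z \in D$ and write
$$ \frac{1}{x-z} = \frac{x-\Re z}{(x-\Re z)^2 + (\Im z)^2} + i\,\frac{\Im z}{(x-\Re z)^2 + (\Im z)^2}. $$
The real and imaginary parts are bounded continuous functions on $\R$ tending to $0$ as $|x| \to \infty$, so $x \mapsto (x-z)^{-1}$ lies in $C_0(\R)$. Given $\varepsilon > 0$, pick a (complex) compactly supported continuous $\varphi$ with $\sup_x |\varphi(x) - (x-z)^{-1}| < \varepsilon$. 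Writing $V := \sup_n V(G_n) < \infty$ (and $V(G) \le V$), the triangle inequality gives
$$ \Bigl| \int \tfrac{1}{x-z}\,\mathrm{d}G_n - \int \tfrac{1}{x-z}\,\mathrm{d}G \Bigr| \le V\varepsilon + \Bigl| \int \varphi\,\mathrm{d}G_n - \int \varphi\,\mathrm{d}G \Bigr| + V\varepsilon, $$
and since the middle term tends to $0$ by vague convergence, letting first $n\to\infty$ and then $\varepsilon \to 0$ yields $s_{G_n}(z) \to s_G(z) = s(z)$.

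For the converse, suppose $s_{G_n}(z) \to s(z)$ for every $z \in D$. By Helly's selection theorem, every subsequence of $\{G_n\}$ has a further subsequence converging vaguely to some $\widetilde G$ of bounded variation with $\widetilde G(-\infty)=0$. By the implication just proved, $s_{\widetilde G}(z) = \lim_k s_{G_{n_k}}(z) = s(z)$ for all $z \in D$, so $\widetilde G$ has Stieltjes transform $s$. The Stieltjes inversion formula then pins $\widetilde G$ down: for continuity points $a<b$ of $\widetilde G$,
$$ \widetilde G(b) - \widetilde G(a) = \lim_{\eta \downarrow 0} \frac{1}{\pi} \int_a^b \Im s(x+i\eta)\,\mathrm{d}x, $$
which, together with $\widetilde G(-\infty)=0$ and right-continuity, determines $\widetilde G$ uniquely; call it $G$. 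Hence every vaguely convergent subsequence of $\{G_n\}$ has the same limit $G$, so $G_n \to G$ vaguely, and $G$ has Stieltjes transform $s$, as required.

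The main obstacle is the compactness/tightness input. Helly's theorem needs a uniform bound on the total variations $V(G_n)$, and the passage from $C_c(\R)$ to the test function $(x-z)^{-1} \in C_0(\R)$ needs to exclude an escape of mass to infinity; both are trivial when the $G_n$ are (sub)probability distribution functions, which is the only case we use, but in the stated generality one must either impose $\sup_n V(G_n) < \infty$ or extract it from the hypothesis (e.g.\ from boundedness of $\Im s_{G_n}(i) = \int (x^2+1)^{-1}\,\mathrm{d}G_n(x)$ when the $G_n$ are nondecreasing). The other ingredient, the inversion formula, is routine: inserting the definition of $s_G$ and applying Fubini reduces it to the elementary limit of $\frac{1}{\pi}\int_a^b \frac{\eta}{(x-t)^2+\eta^2}\,\mathrm{d}x$, which tends to $1$, $\tfrac12$, or $0$ according as $t \in (a,b)$, $t \in \{a,b\}$, or $t \notin [a,b]$.
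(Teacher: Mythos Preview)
The paper does not prove this lemma at all; it is simply quoted as Theorem~B.9 of Bai and Silverstein's monograph and used as a black box. Your argument is the standard proof of this result and is correct: the forward direction via approximation of the $C_0$ function $x\mapsto (x-z)^{-1}$ by compactly supported continuous functions under a uniform total-variation bound, and the converse via Helly's selection theorem together with the Stieltjes inversion formula to pin down the subsequential limit uniquely. You are also right to flag that the hypothesis $\sup_n V(G_n)<\infty$ is needed for Helly's theorem and for the $C_c\to C_0$ passage, even though the lemma as stated omits it; in the paper's application the $G_n$ are (expected) empirical distribution functions, so this bound is automatic.
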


We will also use the following lemma in order to establish the rate of convergence in Theorem \ref{thm:rate}.

\begin{lemma} [ {\cite[Theorem B.14]{bai-book}} ] \label{lemma:bai}
Let $F$ be a distribution function and let $G$ be a function of bounded variation satisfying $\int|F(x) - G(x)|dx < \infty$.  Denote their Stieltjes transforms by $s_F(z)$ and $s_G(z)$ respectively, where $z = u+iv \in D$.  Then
\begin{align*} 
	\|F-G \| &\leq \frac{1}{\pi (1 - \xi)(2 \rho - 1)} \Bigg( \int_{-A}^A |s_F(z) - s_G(z)|du \\
	& \qquad + 2 \pi v^{-1} \int_{|x|>B} |F(x) - G(x)|dx \\
	& \qquad + v^{-1} \sup_x \int_{|u|\leq 2va} |G(x+u) - G(x)|du \Bigg), 
\end{align*}
where the constants $A > B > 0$, $\xi$, and $a$ are restricted by $\rho = \frac{1}{\pi} \int_{|u| \leq a} \frac{1}{u^2 + 1} du > \frac{1}{2}$, and $\xi = \frac{4B}{\pi(A-B)(2 \rho - 1)} \in (0,1)$.  
\end{lemma}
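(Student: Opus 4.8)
The plan is to establish this by a Berry--Esseen--type smoothing argument adapted to Stieltjes transforms, along classical lines. Throughout write $H := F - G$; since $F$ is a distribution function and $\int |H(x)|\,dx < \infty$, one has $H(x) \to 0$ as $x \to \pm\infty$, so an integration by parts gives $s_F(z) - s_G(z) = \int H(x)(x-z)^{-2}\,dx$ for $z = u + iv \in D$. Let $p_v(t) := v/\big(\pi(t^2+v^2)\big)$ be the Cauchy density of scale $v$ and set $F_v := F * p_v$, $G_v := G * p_v$. The basic identity I would use is
$$ F_v'(u) - G_v'(u) \;=\; \tfrac{1}{\pi}\,\Im\!\big(s_F(u+iv) - s_G(u+iv)\big), $$
together with $(F_v - G_v)(u) \to 0$ as $u \to -\infty$, which yields $(F_v - G_v)(u) = \tfrac{1}{\pi}\int_{-\infty}^{u}\Im\!\big(s_F(t+iv) - s_G(t+iv)\big)\,dt$ for every real $u$.

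The first substantive step is a \emph{de-smoothing} estimate, which is the source of the factor $2\rho - 1$. Fix a point $x_0$ nearly attaining $\sup_x\big(F(x)-G(x)\big) =: \Delta^+$ and evaluate $(F_v - G_v)(x_0 + va) = \int (F-G)(x_0 + va - vs)\,\tfrac{ds}{\pi(s^2+1)}$. For $|s| \le a$ one has $x_0 + va - vs \ge x_0$, so monotonicity of $F$ gives $(F-G)(x_0 + va - vs) \ge \Delta^+ - \big(G(x_0 + va - vs) - G(x_0)\big) - o(1)$, while for $|s| > a$ we use the trivial bound $(F-G) \ge -\|F-G\|$. Integrating against $\pi^{-1}(s^2+1)^{-1}\,ds$, using $\tfrac{1}{\pi}\int_{|s|\le a}(s^2+1)^{-1}\,ds = \rho$, and substituting in the $G$-increment integral, one obtains
$$ \|F_v - G_v\| \;\ge\; (2\rho-1)\,\Delta^+ \;-\; \tfrac{1}{\pi v}\sup_x\int_{|u|\le 2va}|G(x+u) - G(x)|\,du \;-\; o(1). $$
The mirror estimate at $x_0 - va$ controls $\Delta^- := \sup_x\big(G(x)-F(x)\big)$, and since $\|F-G\| = \max(\Delta^+,\Delta^-)$, sending the $o(1)$ to zero shows that $(2\rho-1)\|F-G\|$ is bounded by $\|F_v - G_v\|$ plus the third term of the claimed inequality.

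The second step controls $\|F_v - G_v\|$. For each $u$ the integral representation gives $\big|(F_v - G_v)(u)\big| \le \tfrac{1}{\pi}\int_{-A}^{A}|\Im(s_F - s_G)(t+iv)|\,dt + \tfrac{1}{\pi}\int_{|t|>A}|\Im(s_F - s_G)(t+iv)|\,dt$; on the central interval the bound $|\Im(\cdot)| \le |\cdot|$ produces the term $\tfrac{1}{\pi}\int_{-A}^{A}|s_F - s_G|\,du$. For the tail I would insert $\Im(s_F - s_G)(t+iv) = \int H(x)\,\tfrac{2v(x-t)}{\big((x-t)^2+v^2\big)^2}\,dx$, interchange the order of integration, and split the inner integral at $|x| = B$: the contribution of $|x| \le B$ is bounded, using $|H(x)| \le \|F-G\|$ and $|x-t| \ge A-B$ for $|t| > A$, by a constant multiple of $\xi\,(2\rho-1)\,\|F-G\|$ with $\xi = 4B/\big(\pi(A-B)(2\rho-1)\big)$, while the contribution of $|x| > B$ is at most $2\pi v^{-1}\int_{|x|>B}|F(x)-G(x)|\,dx$. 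Combining the two steps and moving the $\xi(2\rho-1)\|F-G\|$ term to the left-hand side — legitimate precisely because the hypotheses force $\rho > \tfrac12$ and $\xi \in (0,1)$ — yields the asserted inequality with the constant $\tfrac{1}{\pi(1-\xi)(2\rho-1)}$.

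The main obstacle is the bookkeeping in these last two steps: one must arrange the de-smoothing so that the coefficient of $\Delta^\pm$ is exactly $2\rho - 1$ rather than merely positive — this is what forces the asymmetric shift by $va$ and the one-sided use of the monotonicity of $F$ — and one must check that the $|x| \le B$ portion of the tail is genuinely absorbable into the left-hand side, which is exactly the role of the constraint $\xi \in (0,1)$. A secondary technicality is the handling of one-sided limits of $F$ and $G$ when selecting the near-extremal point $x_0$; this affects only an $o(1)$ error that is removed at the end.
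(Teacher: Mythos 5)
The paper does not prove this lemma; it is quoted verbatim from Bai--Silverstein \cite[Theorem B.14]{bai-book}, so there is no in-paper proof to compare against. Your reconstruction follows the standard argument that reference gives (and that goes back to Bai's Berry--Esseen--type smoothing inequality): Cauchy smoothing via $F_v = F * p_v$ and the identity $F_v'(u) = \tfrac{1}{\pi}\Im s_F(u+iv)$, a de-smoothing step using the monotonicity of $F$ and the one-sided shift by $va$ to extract the coefficient $2\rho-1$, a truncation of the $u$-integral at $|u|=A$, and a split of the tail of $H = F-G$ at $|x|=B$ with the $|x|\le B$ portion absorbed into the left side via $\xi$. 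The structure is right and the delicate points (why the shift must be one-sided, why $\xi\in(0,1)$ and $\rho>\tfrac12$ are needed for absorption, the change of variables $u = va - vs$ that produces the $2va$-window and the $v^{-1}$ in the $G$-modulus term) are all identified. This is essentially the same route as \cite{bai-book}, so I would accept it; the remaining work is purely constant-tracking, which you correctly flag.
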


\section{Proof of Theorems \ref{thm:main} and \ref{thm:rate} }

Let $\{ A_n \}_{n \geq 1}$ be a sequence of real random matrices that obeys condition \textbf{C0} and assume $c_n := N/n \rightarrow c \in (0, \infty)$.  We begin by introducing some notation.  Let 
$$ s_n(z) := \frac{1}{n} \tr \left( \frac{1}{n} A_n A_n^\mathrm{T} - z I_n \right)^{-1} =  \frac{1}{n} \tr \left( \frac{1}{n} A_n A_n^\mathrm{T} - z \right)^{-1}$$
where $I_n$ is the identity matrix of order $n$ and $z = u + iv$.  Fix $\alpha > 0$ and let 
$$ D_{\alpha, n} := \{ z = u + iv \in \C : |u| \leq \alpha, v_n \leq v \leq 1 \}$$
where $v_n$ is a sequence we will choose later such that $0 < v_n < 1$ for all $n$.  We will eventually allow the sequence $v_n$ to approach zero as $n$ tends to infinity.  

We will use the following lemma to prove Theorems \ref{thm:main} and \ref{thm:rate}.  

\begin{lemma} \label{lemma:equation}
Suppose $\{ A_n \}_{n \geq 1}$ is a sequence of real random matrices that obey condition \textbf{C0} and assume $c_n := N/n \rightarrow c \in (0, \infty)$.  Then for any $\alpha > 0$
$$ \sup_{z \in D_{\alpha,n}} \left| \E s_n(z) - \frac{1}{ c_n - 1 -z -z\E s_n(z)} \right| = O \left( \frac{\sqrt{q_n} + \sqrt{\gamma_n} + \sqrt{\rho_n}}{v^3_n} + \frac{\beta_n + 1}{\sqrt{n} v^3_n} \right). $$
\end{lemma}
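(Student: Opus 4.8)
The plan is to derive the self-consistent equation for $\E s_n(z)$ by the standard resolvent / leave-one-row-out approach, carefully tracking the error terms that arise from the dependence within rows and the weak dependence between rows. Write $R = R(z) := (\frac1n A_n A_n^\mathrm{T} - z)^{-1}$ for the resolvent. The starting point is the identity
\begin{equation*}
  z \E s_n(z) = \frac{1}{n}\E\tr\!\left(\tfrac1n A_n A_n^\mathrm{T} R\right) - 1
  = \frac{1}{n}\sum_{k=1}^n \E\!\left[\tfrac1n r_k^{(n)} R r_k^{(n)\,\mathrm{T}}\right]\cdot(\text{something})-1,
\end{equation*}
more precisely using $\frac1n A_n A_n^\mathrm{T} = zI + \frac1n A_n A_n^\mathrm{T} R \cdot(\tfrac1n A_nA_n^\mathrm{T}-z)$ one obtains the usual decomposition $z\E s_n = -1 + \frac1n\sum_k \E\big[\beta_k\big]$, where $\beta_k = \big(1 + \tfrac1n r_k R_k r_k^\mathrm{T}\big)^{-1}$ and $R_k$ is the resolvent of the matrix with the $k$-th row removed (rank-one perturbation formula / Schur complement). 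So the first step is to set up this algebra cleanly.

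Next I would replace the quadratic form $\tfrac1n r_k R_k r_k^\mathrm{T}$ by its conditional expectation. Writing $\epsilon_k := \tfrac1n r_k R_k r_k^\mathrm{T} - \tfrac1n \tr(R_k\, \E_k[\text{row covariance}])$, I need to show $\E|\epsilon_k|$ is small. Here conditions \eqref{C0:uncor}–\eqref{C0:4cor} and \eqref{C0:row_variance} enter: condition \eqref{C0:uncor} makes the cross terms vanish in expectation, \eqref{C0:var} and \eqref{C0:row_variance} control $\var(\tfrac1n|r_k|^2)$ as noted in the remark, and \eqref{C0:2cor}, \eqref{C0:4cor} bound the off-diagonal contributions $\sum_{i\ne j}(R_k)_{ij}\zeta_{ki}\zeta_{kj}$ after taking $\E_k$; the spectral norm bound $\|R_k\|\le v_n^{-1}$ turns these into errors of size $O(v_n^{-1}(\sqrt{q_n}+\sqrt{\rho_n}+\sqrt{\gamma_n}))$ or so after Cauchy–Schwarz. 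Then one argues $\tr(R_k\,\E_k[\cdot])\approx \tr R_k \approx \tr R = n s_n$, using condition \eqref{C0:var} again to replace the conditional variance matrix by the identity and the rank-one interlacing $|\tr R_k - \tr R|\le v_n^{-1}$ to pass between $R_k$ and $R$; each such swap costs another power of $v_n$ in the denominator, which is where the $v_n^{-3}$ ultimately comes from.

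The genuinely new ingredient, compared with the independent-rows case of \cite{mp}, is that $R_k$ is not independent of $r_k$: condition \eqref{C0:indep} says only that rows more than $\beta_n$ apart are independent. So after conditioning on $\mathcal{F}_k^{(n)}$, I would further compare $R_k$ with the resolvent $R_k^{(\beta)}$ of the submatrix obtained by deleting the $2\beta_n$ rows nearest to row $k$ (rows $k-\beta_n,\dots,k+\beta_n$); this is a perturbation of rank at most $2\beta_n+1$, so $|\tr R_k - \tr R_k^{(\beta)}|\le (2\beta_n+1)v_n^{-1}$, contributing the $\frac{\beta_n+1}{\sqrt n\,v_n^3}$ term (the $\sqrt n$ appears because this error is divided by $n$ and we also need $\beta_n = o(\sqrt n)$ to keep it controlled together with the quadratic-form fluctuation estimate). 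Now $R_k^{(\beta)}$ is $\mathcal{F}_k$-measurable and genuinely independent of $r_k$, so $\E_k$ acts only on the $\zeta_{ki}$'s and the moment condition \eqref{C0:moment} lets us bound everything by the stated quantities. Assembling: $\E[\beta_k]\approx \big(1 + c_n \E s_n - \text{(corrections)}\big)^{-1}$ up to the error terms; since $N/n = c_n$ and $\tfrac1n\tr R_k^{(\beta)} \to \E s_n$, one gets $z\E s_n = -1 + \dfrac{1}{1 + c_n\, z\E s_n/(\cdots)}$ which rearranges to the claimed $\E s_n = \big(c_n - 1 - z - z\E s_n\big)^{-1} + O(\text{errors})$.

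The main obstacle I expect is the fluctuation estimate for the quadratic form $\tfrac1n r_k R_k r_k^{\mathrm{T}}$ under only the weak-dependence hypotheses \eqref{C0:2cor}–\eqref{C0:4cor} and \eqref{C0:row_variance}: in the i.i.d. case this is a one-line variance computation, but here one must carefully expand $\E\big|\tfrac1n r_k R_k^{(\beta)} r_k^{\mathrm{T}} - \tfrac1n\tr R_k^{(\beta)}\big|^2$, separate diagonal from off-diagonal terms, and invoke each of the correlation bounds with the correct power of $n$ and the correct matrix-norm factor (operator norm $v_n^{-1}$ vs. Hilbert–Schmidt norm $\sqrt{n}\,v_n^{-1}$), so that after dividing by $n$ everything collapses to $v_n^{-2}(q_n+\rho_n+\gamma_n+(\beta_n+1)^2/n)$ before taking square roots. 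Bookkeeping the $\beta_n$-truncation simultaneously with the conditioning — making sure $R_k^{(\beta)}$ is used consistently on both sides of the comparison and that the measurability is exactly right — is the delicate part; the rest is the routine resolvent manipulation familiar from \cite[Chapter 3]{bai-book}.
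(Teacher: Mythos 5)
Your overall skeleton (Schur complement, replace the quadratic form by its conditional expectation, interlacing to pass from $R_{n,k}$ to $R_n$, keep track of the $v_n$ powers) matches the paper. But you misidentify what condition \eqref{C0:indep} is needed for, and this leads your error accounting astray.

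You write that ``the genuinely new ingredient ... is that $R_k$ is not independent of $r_k$,'' and you propose replacing $R_{n,k}$ with a resolvent $R_k^{(\beta)}$ in which the $2\beta_n+1$ rows nearest to $r_k$ are also deleted, so as to restore independence. This is not needed and not what the paper does: $B_{n,k}=A_{n,k}^{\mathrm T}R_{n,k}A_{n,k}$ is already $\mathcal F_k^{(n)}$-measurable by construction ($\mathcal F_k^{(n)}$ is generated by \emph{all} rows except $r_k$), and conditions \eqref{C0:uncor}--\eqref{C0:4cor} and \eqref{C0:row_variance} are stated precisely in terms of $\E_k[\,\cdot\,]$ with respect to $\mathcal F_k^{(n)}$. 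So the identity $\E\bigl[\zeta_{ki}\zeta_{kj}(B_{n,k})_{ij}\bigr]=\E\bigl[\E_k[\zeta_{ki}\zeta_{kj}](B_{n,k})_{ij}\bigr]$ works directly, independence not required. Condition \eqref{C0:indep} is used in the paper for something else entirely: after computing the conditional variance of $\frac1n r_k B_{n,k}r_k^{\mathrm T}$, one is left with the term $\var\bigl(\frac1n\tr B_{n,k}\bigr)$, and \emph{this} is bounded by a martingale argument (Lemma \ref{lemma:variance} in the appendix) in which $\beta_n$ measures how many rows must be deleted before the martingale increments decouple. That argument gives $\var\bigl(\frac1n\tr B_{n,k}\bigr)=O\bigl((\beta_n+1)^2/(n v_n^2)\bigr)$, whose square root produces the $\frac{\beta_n+1}{\sqrt n\,v_n}$ term, and after multiplying by the $v_n^{-2}$ from the Lipschitz bound on $1/a_k$ you get $\frac{\beta_n+1}{\sqrt n\,v_n^{3}}$.

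Your account of the $\sqrt n$ is therefore wrong: a rank-$(2\beta_n+1)$ perturbation of the resolvent trace, divided by $n$, gives an error of order $\frac{\beta_n+1}{n v_n}$ (one full power of $n$), not $\frac{\beta_n+1}{\sqrt n\,v_n}$. The half-power arises specifically because it is a \emph{variance}, bounded via Burkholder from $n$ martingale increments each of size $(\beta_n+1)/(n v_n)$, so $\sum_k|\alpha_k|^2 = O\bigl(n\cdot(\beta_n+1)^2/(n^2 v_n^2)\bigr)$. Moreover, even in your $R_k^{(\beta)}$ framework the total variance of $\frac1n r_k B_{n,k}^{(\beta)}r_k^{\mathrm T}$ decomposes as $\E[\var(\cdot\mid R_k^{(\beta)})]+\var\bigl(\frac1n\tr B_{n,k}^{(\beta)}\bigr)$, so you cannot avoid bounding $\var(\frac1n\tr B_{n,k}^{(\beta)})$ and you still need exactly the martingale lemma; the deletion of $2\beta_n$ extra rows buys you nothing for the mean and only complicates the quadratic-form estimate (since $\|B_{n,k}-B_{n,k}^{(\beta)}\|$ need not be small in operator norm even though the perturbation has low rank). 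You should therefore keep the single-row leave-one-out $R_{n,k}$ everywhere, handle the mean of $a_k$ with $\E_k$ directly, split the fourth-moment sum $\frac{1}{n^2}\sum_{i,j,s,t}\E[\zeta_{ki}\zeta_{kj}\zeta_{ks}\zeta_{kt}(B_{n,k})_{ij}(B_{n,k})_{st}]$ into the diagonal/off-diagonal cases governed by \eqref{C0:2cor}, \eqref{C0:4cor}, \eqref{C0:row_variance}, and invoke the appendix martingale lemma for $\var(\frac1n\tr B_{n,k})$; this is the one place where $\beta_n$ and condition \eqref{C0:indep} appear.
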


We prove Lemma \ref{lemma:equation} in Section \ref{sec:lemma}.  For the moment, assume this lemma.  First, take $v_n > 0$ to be fixed.  Then $D_{\alpha,n}$ does not change with $n$.  Since $c_n \rightarrow c$, we obtain that
$$ \E s_n(z) = \frac{1}{ c - 1 -z -z\E s_n(z)} + o(1) $$
for all $z \in D_{\alpha, n}$.  Fix $z_0 = u_0 + iv_0 \in D_{\alpha, n}$.  Since $|\E s_n(z_0)| \leq \frac{1}{v_0}$, one can use a compactness argument to obtain a convergent subsequence $\E s_{n_k}(z_0) \rightarrow s(z_0)$.  Then $s(z_0)$ must satisfy the equation
$$ s(z_0) = \frac{1}{ c - 1 -z_0 -z_0 s(z_0)}. $$
Also, since the eigenvalues of $A_n A_n^\mathrm{T}$ are non-negative, $\Im(z \E s_n(z)) \geq 0$ for all $\Im(z) >0$ and hence $\Im(z_0 s(z_0)) \geq 0$.  Thus, by the characterization \eqref{eq:mcz}, it follows that $s(z_0) = m_c(z_0)$.  Since every convergent subsequence of $\{\E s_n(z_0) \}$ must converge to the same limit, we obtain that
$$ \lim_{n \rightarrow \infty} \E s_n(z_0) = m_c(z_0) $$
and since $z_0 \in D_{\alpha,n}$ was arbitrary, one obtains
\begin{equation} \label{eq:smc}
	\lim_{n \rightarrow \infty} \E s_n(z) = m_c(z)
\end{equation}
for all $z \in D_{\alpha,n}$.  Finally, since $|\E s_n(z)| \leq \frac{1}{v}$, Vitali's Convergence Theorem implies that \eqref{eq:smc} holds for all $z \in D$.  Therefore,
$$ \|\E F^{\frac{1}{n}A_n A_n^\mathrm{T}} - F_{c} \| : = \sup_{x} |\E F^{\frac{1}{n}A_n A_n^\mathrm{T}} - F_{c}| \longrightarrow 0 $$
as $n \rightarrow \infty$.  

To obtain the almost sure convergence in Theorem \ref{thm:main}, one repeats the argument above and then applies the Borel-Cantelli lemma, since
$$ \Prob \left( |s_n(z) - \E s_n(z) | \geq \epsilon \right) \leq \frac{C_p (\beta_n + 1)^p}{n^{p/2} v_n^p \epsilon^p} $$
by Lemma \ref{lemma:variance} from Appendix \ref{app:variance}.  

To prove Theorem \ref{thm:rate}, we will apply Lemma \ref{lemma:bai}.  Under assumption \eqref{eq:norm_bound}, there exists $B>0$ such that
$$ \E F^{\frac{1}{n}A_n A_n^\mathrm{T}}(x) - F_{c_n}(x) = 0 $$
for all $|x| > B$ and $n$ sufficiently large.  

By \cite[Lemma 8.15]{bai-book}, it follows that
$$ \sup_x \int_{|u| < v_n} |F_{c_n}(x+u) - F_{c_n}(x)| = O \left( v_n^{3/2} \right) $$
for $c_n \geq 1$.  

From Lemma \ref{lemma:equation}, we have that
$$ \E s_n(z) = \frac{1}{c_n - 1 - z -z \E s_n(z)} + \delta_n $$
for all $z \in D_{\alpha,n}$.  Thus
$$ z (\E s_n(z))^2 + \E s_n(z) (z + 1 - c_n) + 1 = \delta_n (1- c_n - z - z \E s_n(z)). $$
By subtracting the quadratic equation for $m_{c_n}(z)$ obtained from \eqref{eq:mcz}, one finds that
$$ |\E s_n(z) - m_{c_n}(z)| = \frac{|\delta_n||1- c_n + z + z \E s_n(z)|}{|z \E s_n(z) + z m_{c_n}(z) + z + 1 - c_n|} = O \left( \frac{\delta_n}{v_n^2} \right). $$

Therefore, from Lemma \ref{lemma:bai}, one obtains that
$$ \|\E F^{\frac{1}{n}A_n A_n^\mathrm{T}} - F_{c_n} \| = O \left( \frac{\sqrt{q_n} + \sqrt{\gamma_n} + \sqrt{\rho_n}}{v^5_n} + \frac{\beta_n+1}{\sqrt{n} v^5_n} + \sqrt{v_n} \right) $$
and hence we can take
$$ v_n = \max\left( q_n^{1/11}, \gamma_n^{1/11}, \rho_n^{1/11}, \left( \frac{(\beta_n+1)^2}{n} \right)^{1/11}  \right). $$
The proof of Theorem \ref{thm:rate} is complete.  

It only remains to prove Lemma \ref{lemma:equation}.  

\section{Proof of Lemma \ref{lemma:equation}} \label{sec:lemma}

Let $\{ A_n \}_{n \geq 1}$ be a sequence of real random matrices that obey condition \textbf{C0} and assume $c_n := N/n \rightarrow c \in (0, \infty)$.  Fix $\alpha > 0$.  In order to simplify notation, we drop the superscript $(n)$ and write $\zeta_{ij}$ and $r_k$ for the entries of $A_n$ and the rows of $A_n$ respectively.  We define the resolvent 
$$ R_n(z) :=  \left( \frac{1}{n} A_n A_n^\mathrm{T} - z \right)^{-1}.  $$
Using the Schur complement, we obtain that
$$ (R_n(z))_{kk} = \frac{1}{ \frac{1}{n} |r_k|^2 - z - \frac{1}{n}r_k A_{n,k}^\mathrm{T} R_{n,k}(z) A_{n,k} r_k^\mathrm{T} } =: \frac{1}{a_k} $$
where $A_{n,k}$ is obtained from the matrix $A_n$ by removing the $k$-th row and 
$$ R_{n,k} = \left( \frac{1}{n} A_{n,k} A_{n,k}^\mathrm{T} - z \right)^{-1}. $$

Since $| (R_n(z))_{kk}| \leq \|R_{n}(z)\| \leq \frac{1}{v_n}$, we obtain that $|a_k| \geq v_n$.  Thus,
\begin{align} \label{eq:san}
	\left| \E s_n(z) - \frac{1}{n} \sum_{k=1}^n \frac{1}{ \E a_k} \right| \leq \frac{1}{n v_n^2} \sum_{k=1}^n \E| a_k - \E a_k |. 
\end{align}

We now compute the expectation of $a_k$.  By condition \eqref{C0:var} in Definition \ref{def:C0}, we have that
$$ \sup_{k} \left| \E \frac{1}{n} |r_k|^2 - c_n \right| \leq q_n. $$
For convenience, write
$$ B_{n,k} := A_{n,k}^\mathrm{T} R_{n,k}(z) A_{n,k}. $$
We first note that $\sup_{k} \|B_{n,k}\| = O(v_n^{-1})$.  Indeed, since $|z|^2 \leq \alpha^2 + 1 = O(1)$, 
$$ \|B_{n,k}\| = \|  R_{n,k}(z) A_{n,k} A_{n,k}^\mathrm{T} \| = \| I_{n-1} + z R_{n,k}(z) \| \leq 1 + \frac{|z|}{v_n} $$
for all $k= 1, \ldots, n$.  Then we have that
\begin{align} \label{eq:rbr}
	\E \frac{1}{n} r_k B_{n,k} r_k^\mathrm{T} = \frac{1}{n} \sum_{i,j=1}^N \E \left[ \E_k[\zeta_{ki} \zeta_{kj}] (B_{n,k})_{ij} \right] = \frac{1}{n} \E \tr B_{n,k} + \epsilon_{n,k} + O\left( \frac{q_n}{v_n} \right)
\end{align}
uniformly for all $k$ (by condition \eqref{C0:var} in Definition \ref{def:C0}) where 
$$ \epsilon_{n,k} := \frac{1}{n} \sum_{i \neq j} \E \left[ \E_k[\zeta_{ki} \zeta_{kj}] (B_{n,k})_{ij} \right]. $$
By condition \eqref{C0:2cor}, we have that
\begin{align*}
 |\epsilon_{n,k}| &\leq \left( \E \frac{\gamma_n^2}{n^3} \sum_{i, j,s,t=1}^N |(B_{n,k})_{ij}| |(B_{n,k})_{s,t}| \right)^{1/2} \\
	& \leq \left( \E \frac{2 \gamma_n^2}{n} \tr (B_{n,k} B_{n,k}^\ast) \right)^{1/2} = O \left( v_n^{-1} \gamma_n \right) 
\end{align*}
uniformly in $k$.  

Combining the above yields,
\begin{equation} \label{eq:acz}
	\sup_{k} \left| \E a_k - \left( c_n - z - \E \frac{1}{n} \tr B_{n,k}\right) \right| = O\left( \frac{q_n + \gamma_n }{v_n} \right)
\end{equation}
We now note that $\frac{1}{n}\tr B_{n,k} = \frac{n-1}{n} + z \frac{1}{n} \tr R_{n,k}(z)$.  By equation (3.11) in \cite{bai} (or alternatively, by Cauchy's Interlacing Theorem), one finds that 
\begin{equation} \label{eq:cauchy}
	\left| \frac{1}{n} \tr R_{n,k}(z) - \frac{1}{n} \tr R_n(z) \right| = O \left( \frac{1}{n v_n} \right)
\end{equation}
uniformly in $k$.  Therefore, from \eqref{eq:acz} and the fact that $\frac{1}{n} \tr R_n(z) = s_n(z)$, we obtain that
\begin{equation} \label{eq: }
	\sup_{k} \left| \E a_k - \left( c_n - 1 - z - z \E s_n(z) \right) \right| = O\left( \frac{q_n + \gamma_n }{v_n} + \frac{1}{n v_n } \right)
\end{equation}

We now turn our attention to obtaining a bound for $\E| a_k - \E a_k |$.  First we note that
$$ \E \left| \frac{1}{n} |r_k|^2 - c_n \right| \leq q_n $$
by condition \eqref{C0:var} of Definition \ref{def:C0}.  Using \eqref{eq:rbr} and the bounds obtained above for $\epsilon_{n,k}$, we have that
\begin{align*}
	\E \left| \frac{1}{n}r_k  B_{n,k} r_k^\mathrm{T} - \E \frac{1}{n} \tr B_{n,k} \right|^2 &= \frac{1}{n^2} \sum_{i,j,s,t=1}^N \E[ \zeta_{ki} \zeta_{kj} \zeta_{ks} \zeta_{kt} (B_{n,k})_{ij} (B_{n,k})_{st}] \\
	& \qquad  - \left( \E \frac{1}{n} \tr B_{n,k} \right)^2 + O \left( \frac{q_n + \gamma_n}{v_n^2} \right). 
\end{align*}
For the sum
$$ \frac{1}{n^2} \sum_{i,j,s,t=1}^N \E[ \zeta_{ki} \zeta_{kj} \zeta_{ks} \zeta_{kt} (B_{n,k})_{ij} (B_{n,k})_{st}]. $$
we consider several cases:
\begin{enumerate}[(a)]
\item When we sum over all $i,j,s,t$ distinct, one finds
$$ \frac{1}{n^2} \sum \E[ \zeta_{ki} \zeta_{kj} \zeta_{ks} \zeta_{kt} (B_{n,k})_{ij} (B_{n,k})_{st}] = O \left( \frac{\gamma_n}{v_n^2} \right) $$
by condition \eqref{C0:4cor}.  
\item When we sum of all $i=j,s=t$, we obtain
$$ \frac{1}{n^2} \sum \E[ \zeta_{ki} \zeta_{kj} \zeta_{ks} \zeta_{kt} (B_{n,k})_{ij} (B_{n,k})_{st}] = \E \left[ \left( \frac{1}{n} \tr B_{n,k} \right)^2 \right] + O \left( \frac{\rho_n}{v_n^2} \right) $$
by condition \eqref{C0:row_variance}.  
\item When we sum over all $i=s,j=t$ (or $i=t,j=s$), we have
\begin{align*}
	\frac{1}{n^2} &\sum \E[ \zeta_{ki} \zeta_{kj} \zeta_{ks} \zeta_{kt} (B_{n,k})_{ij} (B_{n,k})_{st}] \\
	& \qquad = \E \left[ \frac{1}{n^2} \tr(B_{n,k} B_{n,k}^\ast) \right] + O\left( \frac{\rho_n}{v_n^2} \right) \\
	&\qquad = O \left( \frac{\rho_n}{v_n^2} + \frac{1}{nv_n^2} \right)
\end{align*}
by condition \eqref{C0:row_variance}.  
\item When $i=s, j \neq t$ (or $i=t,j \neq s$), one finds that
$$ \frac{1}{n^2} \sum \E[ \zeta_{ki} \zeta_{kj} \zeta_{ks} \zeta_{kt} (B_{n,k})_{ij} (B_{n,k})_{st}] = O \left( \frac{\gamma_n}{\sqrt{n} v_n^2} \right) $$
by condition \eqref{C0:2cor}.  
\item When $i=j,s \neq t$, we find
\begin{align*}
	\frac{1}{n^2} &\sum \E[ \zeta_{ki} \zeta_{kj} \zeta_{ks} \zeta_{kt} (B_{n,k})_{ij} (B_{n,k})_{st}] \\
	& \qquad \leq \frac{\gamma_n}{n^{3/2}v_n} \E\left( \sum_{s,t} |(B_{n,k})_{st}| \right) \\
	& \qquad \leq \frac{\gamma_n}{n^{3/2}v_n} \E\left( \sum_{s,t,l,m} |(B_{n,k})_{st}| |(B_{n,k})_{lm}| \right)^{1/2} \\	& \qquad \leq \frac{\gamma_n}{n^{3/2}v_n} n \E \left( 2 \tr (B_{n,k} B_{n,k}^\ast) \right)^{1/2} = O\left( \frac{\gamma_n}{v_n^2} \right)
\end{align*}
by Cauchy-Schwarz and condition \eqref{C0:2cor}.  
\end{enumerate}

Therefore, we obtain that
\begin{align} \label{eq:erb}
	\E \left| \frac{1}{n}r_k  B_{n,k} r_k^\mathrm{T} - \E \frac{1}{n} \tr B_{n,k} \right|^2 = \var\left( \frac{1}{n} \tr B_{n,k} \right) + O \left( \frac{q_n + \gamma_n + \rho_n}{v_n^2} + \frac{1}{n v_n^2} \right).
\end{align}
The bound in \eqref{eq:erb} holds uniformly in $k$ since the bounds in conditions \eqref{C0:2cor}, \eqref{C0:4cor}, and \eqref{C0:row_variance} of Definition \ref{def:C0} hold uniformly in $k$.  

By Lemma \ref{lemma:variance} in Appendix \ref{app:variance}, we have that
$$ \sup_{k} \var\left( \frac{1}{n} \tr B_{n,k} \right) = O \left( \frac{(\beta_n+1)^2}{n v_n^2} \right) $$
and hence
\begin{align}
	\E \left| \frac{1}{n}r_k  B_{n,k} r_k^\mathrm{T} - \E \frac{1}{n} \tr B_{n,k} \right|^2 = O \left( \frac{q_n + \gamma_n + \rho_n}{v_n^2} + \frac{(\beta_n+1)^2}{n v_n^2} \right).  
\end{align}
Therefore
\begin{align}
	\sup_{k} \E|a_k - \E a_k| = O \left( \frac{\sqrt{q_n} + \sqrt{\gamma_n} + \sqrt{\rho_n}}{v_n} + \frac{\beta_n+1}{\sqrt{n} v_n} \right).  
\end{align}
One can also observe that $\Im (zs_n(z)) \geq 0$ for all $z$ with $\Im z > 0$, since the eigenvalues of $A_n A_n^\mathrm{T}$ are non-negative.  Combining this fact with equations \eqref{eq:san} and \eqref{eq:acz} and the estimates above, we obtain that
$$ \left| \E s_n(z) - \frac{1}{1 - c_n - z - z s_n(z)} \right| = O \left( \frac{\sqrt{q_n} + \sqrt{\gamma_n} + \sqrt{\rho_n}}{v^3_n} + \frac{\beta_n+1}{\sqrt{n} v^3_n} \right) $$
where the bound holds uniformly for $z \in D_{\alpha,n}$.  The proof of Lemma \ref{lemma:equation} is complete.

\begin{remark} \label{rem:changes}
As noted in Remark \ref{rem:assumption}, it is possible to show that if the sequence $\{A_n\}_{n \geq 1}$ satisfies conditions \eqref{C0:uncor} - \eqref{C0:row_variance} from Definition \ref{def:C0} with $c_n \rightarrow c \in (0,\infty)$, then 
\begin{equation} \label{eq:rem:conv}
	\|\E F^{\frac{1}{n}A_n A_n^\mathrm{T}} - F_{c} \| \longrightarrow 0 
\end{equation}
as $n \rightarrow \infty$.  The proof of the above statement repeats the proof of Theorem \ref{thm:main} almost exactly; we now detail the necessary changes.  

Since the Stieltjes transform is an analytic and bounded function, it suffices to prove the convergence of $\E s_n(z)$ to $m_c(z)$ for all $z$ in a compact set in the upper-half plane with $\Im(z) \geq \kappa$ for a sufficiently large constant $\kappa$ to be chosen later.   

A careful reading of the proof of Lemma \ref{lemma:equation} reveals that condition \eqref{C0:indep} from Definition \ref{def:C0} is only used to invoke Lemma \ref{lemma:variance} and obtain the variance bound \eqref{eq:stvar}.  Thus, in order to prove \eqref{eq:rem:conv}, it suffices to show that
\begin{equation} \label{eq:rem:varshow}
	\var \left( \frac{1}{n} \tr R_n(z) \right) = o(1) 
\end{equation}
for all $z$ in a compact set in the upper-half plane with $\Im(z) \geq \kappa$.  

We decompose 
\begin{align*}
	(R_n(z))_{kk} &= \frac{1}{ \frac{1}{n} |r_k|^2 - z - \frac{1}{n} r_k A_{n,k}^\mathrm{T} R_{n,k}(z) A_{n,k} r_k^\mathrm{T} } \\
	&= \frac{1}{c_n - 1 - z - z \E s_n(z) - \epsilon_k}
\end{align*}
where
$$ \epsilon_k = c_n -1 - z\E s_n(z) - \frac{1}{n} |r_k|^2 + \frac{1}{n} r_k A_{n,k}^\mathrm{T} R_{n,k}(z) A_{n,k} r_k^\mathrm{T}. $$
Thus 
\begin{align*}
	(R_{n}(z))_{kk} &= \frac{1}{c_n - 1 -z -z \E s_n(z)} \\
	& \qquad + \frac{\epsilon_k}{ (c_n - 1 - z - z \E s_n(z))((c_n - 1 - z - z \E s_n(z) - \epsilon_k)} \\
	& = \frac{1}{c_n - 1 - z -z\E s_n(z)} [1 + (R_{n}(z))_{kk} \epsilon_k]. 
\end{align*}
Taking $\Im(z) \geq \kappa$ we obtain
$$ \var \left( \frac{1}{n} \tr R_{n}(z) \right) \leq \frac{C}{\kappa^4 n} \sum_{k=1}^n \E|\epsilon_k|^2 $$
for some absolute constant $C>0$.  Using condition \eqref{C0:row_variance} from Definition \ref{def:C0}, \eqref{eq:cauchy}, and \eqref{eq:erb}, we bound $\E|\epsilon_k|^2$ and obtain
$$ \var \left( \frac{1}{n} \tr R_{n}(z) \right) \leq \frac{C|z|^2}{\kappa^4} \var \left(\frac{1}{n} \tr R_{n}(z) \right) + O \left(q_n + \gamma_n + \rho_n + \frac{1}{\sqrt{n}} \right).  $$
Taking $z$ in a compact set for which $|z|^2/\kappa^4$ is sufficiently small verifies \eqref{eq:rem:varshow}.  
\end{remark}

\appendix 
\section{Estimate of Variance of Stieltjes Transform} \label{app:variance}

\begin{lemma} \label{lemma:variance}
Let $A_n$ be an $n \times N$ real random matrix with rows $r^{(n)}_1, \ldots, r^{(n)}_n$ that satisfy condition \eqref{C0:indep} from Definition \ref{def:C0}.  Then for every $p>1$ there exists a constant $C_p > 0$ (depending only on $p$) such that 
\begin{equation} \label{eq:st4mom}
	\E \left| \frac{1}{n} \tr \left( \frac{1}{n} A_n A_n^\mathrm{T} - zI_n \right)^{-1} - \frac{1}{n} \E\tr \left( \frac{1}{n} A_n A_n^\mathrm{T} - zI_n \right)^{-1}\right|^p \leq \frac{C_p (\beta_n + 1)^p }{n^{p/2} |\Im z|^p}
\end{equation}
for any $z$ with $\Im z \neq 0$.  In particular, there exists an absolute constant $C>0$ such that
\begin{equation} \label{eq:stvar}
	\var \left[ \frac{1}{n} \tr \left( \frac{1}{n} A_n A_n^\mathrm{T} - zI_n \right)^{-1} \right] \leq \frac{C (\beta_n+1)^2}{n |\Im z|^2}
\end{equation}
for any $z$ with $\Im z \neq 0$.
\end{lemma}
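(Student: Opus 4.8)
The plan is to run a martingale difference decomposition along the rows of $A_n$, using condition \eqref{C0:indep} to make the decomposition ``telescope'' in the right way, and the interlacing bound for the trace of a resolvent to control the size of each increment. Throughout, fix $z$ with $v := |\Im z| > 0$ and write $g_n := \frac1n \tr(\frac1n A_n A_n^\mathrm{T} - zI_n)^{-1}$.

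First I would introduce the filtration $\mathcal{G}_k := \sigma(r_1^{(n)}, \ldots, r_k^{(n)})$ for $0 \le k \le n$ (with $\mathcal{G}_0$ trivial), and write $g_n - \E g_n = \sum_{k=1}^n d_k$ with $d_k := \E[g_n \mid \mathcal{G}_k] - \E[g_n \mid \mathcal{G}_{k-1}]$, a martingale difference sequence. The role of condition \eqref{C0:indep} is to let us subtract a convenient term from each $d_k$: for each $k$, set $J_k := \{k, k+1, \ldots, \min(k+\beta_n, n)\}$, let $A_n^{(k)}$ be $A_n$ with the rows indexed by $J_k$ deleted, and put $h_k := \frac1n \tr(\frac1n A_n^{(k)}(A_n^{(k)})^\mathrm{T} - zI)^{-1}$. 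The key claim is that $\E[h_k \mid \mathcal{G}_k] = \E[h_k \mid \mathcal{G}_{k-1}]$: indeed $h_k$ is a measurable function of the rows indexed by $\{1,\ldots,k-1\} \cup \{k+\beta_n+1,\ldots,n\}$ only, and since every pair $(i,j)$ with $i \ge k+\beta_n+1$ and $j \le k$ has $|i-j| \ge \beta_n + 1 > \beta_n$, condition \eqref{C0:indep} shows $\sigma(r_{k+\beta_n+1}^{(n)},\ldots,r_n^{(n)})$ is independent of $\mathcal{G}_k$; integrating those rows out against their unconditional law leaves a function of $(r_1^{(n)},\ldots,r_{k-1}^{(n)})$, so $\E[h_k \mid \mathcal{G}_k]$ is $\mathcal{G}_{k-1}$-measurable and the claim follows. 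Hence $d_k = \E[g_n - h_k \mid \mathcal{G}_k] - \E[g_n - h_k \mid \mathcal{G}_{k-1}]$.

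Next I would bound $|d_k|$. Since $\frac1n A_n^{(k)}(A_n^{(k)})^\mathrm{T}$ is the principal submatrix of $\frac1n A_n A_n^\mathrm{T}$ obtained by deleting the $|J_k| \le \beta_n+1$ rows and columns indexed by $J_k$, iterating the one-row interlacing estimate (equation (3.11) in \cite{bai}, already used in \eqref{eq:cauchy}) gives $|g_n - h_k| \le \frac{\beta_n+1}{nv}$ almost surely, and since conditional expectation is an $L^\infty$-contraction, $|d_k| \le \frac{2(\beta_n+1)}{nv}$ almost surely for every $k$. Finally, decomposing $g_n - \E g_n$ into real and imaginary parts and applying Burkholder's inequality to each real-valued martingale yields, for some $C_p$ depending only on $p$,
\[ \E|g_n - \E g_n|^p \le C_p\, \E\Big(\sum_{k=1}^n |d_k|^2\Big)^{p/2} \le C_p\Big(\frac{4(\beta_n+1)^2}{n v^2}\Big)^{p/2}, \]
which is \eqref{eq:st4mom} after renaming the constant. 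For \eqref{eq:stvar} I would take $p = 2$, or use orthogonality of the $d_k$ directly: $\var(g_n) = \sum_{k=1}^n \E|d_k|^2 \le \frac{4(\beta_n+1)^2}{nv^2}$.

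I expect the main obstacle to be choosing the size of the deleted block $J_k$ correctly. It must be large enough that after deletion the remaining matrix depends only on rows independent of $\mathcal{G}_k$ — so that the $h_k$-term disappears from the martingale increment — yet small enough (at most $\beta_n+1$ rows) that the interlacing bound keeps each $|d_k|$ of order $(\beta_n+1)/(nv)$. Taking $J_k$ to be the $\beta_n+1$ consecutive indices $k, k+1, \ldots, k+\beta_n$ (truncated at $n$) is precisely the choice that satisfies both requirements. The remaining ingredients — Cauchy interlacing for the resolvent trace and Burkholder's martingale moment inequality — are standard, the latter applied to the real and imaginary parts separately to accommodate the complex value of $z$.
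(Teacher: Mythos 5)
Your proposal is correct and follows essentially the same route as the paper: decompose $g_n - \E g_n$ as a martingale difference sequence along the row filtration, subtract the trace of the resolvent with the block $J_k = \{k,\ldots,k+\beta_n\}$ of rows deleted (using condition (vii) to show that subtracted term has the same conditional expectation at levels $k$ and $k-1$), control each increment by $2(\beta_n+1)/(n|\Im z|)$ via the iterated interlacing bound, and finish with Burkholder. The only cosmetic difference is that you explicitly split into real and imaginary parts before invoking Burkholder, whereas the paper cites a complex-valued version of the inequality directly.
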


\begin{proof}
Since \eqref{eq:st4mom} implies \eqref{eq:stvar} when $p=2$, it suffices to prove \eqref{eq:st4mom} for arbitrary $p>1$.  Let $\E_{\leq k}$ denote the conditional expectation with respect to the $\sigma$-algebra generated by $r^{(n)}_1, \ldots, r^{(n)}_k$.  Define
$$ R_n(z) := \left( \frac{1}{n} A_n A_n^\mathrm{T} - zI_n \right)^{-1} $$
and
$$ Y_k := \E_{\leq k} \frac{1}{n} \tr R_n(z)$$
for $k = 0, 1, \ldots, n$.  Then $\{Y_k\}_{k=0}^n$ is a martingale since $\E_{\leq k} Y_{k+1} = Y_k$.  Define the martingale difference sequence
$$ \alpha_k := Y_{k} - Y_{k-1} $$
for $k = 1, 2, \ldots, n$.  We then note that
\begin{equation} \label{eq:kak}
	\sum_{k=1}^n \alpha_k = \frac{1}{n} \tr R_n(z) - \E \frac{1}{n} \tr R_n(z). 
\end{equation}

We will bound the $p$-th moment of the sum in \eqref{eq:kak}, but first we obtain a bound on the individual summands $\alpha_k$.  

For each $1 \leq k \leq n$, define the set
$$ J(k) := \{ 1 \leq j \leq n : k \leq j \leq k+ \beta_n \}. $$
Now let $A_{n,J(k)}$ be obtained from the matrix $A_n$ by removing row $j$ if and only if $j \in J(k)$.  Let  
$$ R_{n,J(k)}(z) = \left( \frac{1}{n} A_{n,J(k)} A_{n,J(k)}^\mathrm{T} - zI_n \right)^{-1}. $$

A simple computation (see for instance \cite[Example 5.1.5]{Dur}) reveals that $\E_{\leq k} \tr R_{n,J(k)}(z) = \E_{\leq k-1} \tr R_{n,J(k)}(z)$ by condition \eqref{C0:indep} of Definition \ref{def:C0}.  Thus 
$$ \alpha_k = \E_{\leq k} \frac{1}{n} \left( \tr R_n(z) - \tr R_{n,J(k)}(z) \right) - \E_{\leq k-1} \frac{1}{n} \left( \tr R_n(z) - \tr R_{n,J(k)}(z) \right). $$

Using the triangle inequality and equation (3.11) in \cite{bai}, we have that
$$ \left| \tr R_n(z) - \tr R_{n,J(k)}(z) \right| \leq \frac{\beta_n + 1}{| \Im z|} $$
and hence 
$$ |\alpha_k| \leq \frac{2 (\beta_n + 1)}{n|\Im z|}. $$

We now apply the Burkholder inequality (see for example \cite[Lemma 2.12]{bai-book} for a complex-valued version of the Burkholder inequality) and obtain that there exists a constant $C_p>0$ such that
\begin{align*}
	\E \left| \sum_{k=1}^n \alpha_k \right|^p &\leq C_p \E \left( \sum_{k=1}^n |\alpha_k|^2 \right)^{p/2}  \\
	& \leq C_p \left( \frac{4 (\beta_n + 1)^2 n}{n^2 |\Im z|^2} \right)^{p/2} \\
	& \leq \frac{C_p 2^p (\beta_n + 1)^p}{n^{p/2} |\Im z|^p}.
\end{align*}
\end{proof}

\subsection*{Acknowledgment}
The author is grateful to A. Litvak for pointing out Theorem 3.13 in \cite{ALPT}.  The author would also like to thank A. Soshnikov and D. Renfrew for useful conversations and comments.

\end{document}